\newcommand{\eqn}[1]{(\ref{#1})}
\newcommand{\fig}[1]{Fig. \ref{#1}}
\newtheorem{theorem}{Theorem}
\newtheorem{corollary}{Corollary}
\newtheorem{lemma}{Lemma}
\begin{document}

\title{Central Limit for the Product of Free Random Variables}

\author{Keang-Po Ho\\e-mail: kpho@ieee.org}


\date{\today}

\maketitle

\begin{abstract}
The central limit for the product of free random variables are studied by evaluating all the moments of the limit distribution.
The logarithm of the central limit is found to be the same as the sum of two independent free random variables: one semicircularly distributed and another uniformly distributed. 
The logarithm of central limit has a moment-generating function of $\exp(\xi^2 s/2) {_{1}F_{1}}\left(1-s; 2; -\xi^2 s \right)$. \\
{\bf Keywords:} {Free central limit theorem, products of free random variables, log-semicircle distribution} \\
{\bf 2000 Mathematics Subject Classification:} {46L54, 60F05, 62E17}
\end{abstract}


\section{Introduction}
Free probability theory \cite{nica06,voiculescu92} is for free random variables. 
The theory for free random variables are applicable to study random matrices with a very large dimension \cite{speicher03} \cite[\S 4]{voiculescu92}.
The statistical properties of free random variables are equivalently that of the eigenvalues of large random matrices.

In free probability theory, the central limit theorem on the sum of independent free random variables gives semicircle distribution \cite{voiculescu86,voiculescu91} \cite[\S 3.5]{voiculescu92}. Semicircle distribution, may be called free normal distribution, for free random variables serves the same as normal distribution for regular commuting random variables.

For positive conventional commuting random variables, the product of positive random variables has a central limit as the log-normal distribution because of $x_1 x_2  = \exp(\log x_1 + \log x_2 )$ for positive random variables $x_1$ and $x_2$.
As the central limit for the logarithm of a random variable is normal distribution, the product of random variables has its central limit as log-normal distribution. 
Log-normal distribution is applicable to many physical phenomena \cite{limpert01}, the shadowing of wireless channel \cite[\S 4.9.2]{rappaport02}, and some nonlinear noise in fiber communications \cite{ho0007}.
Log-normal distribution is also very close to the power law or Zipf law \cite{mitzenmacher04,newman05}.

For positive free random variables, the relationship of $X_1 \boxtimes X_2$ cannot guarantee the same as $\exp(\log X_1 \boxplus \log X_2)$, the symbols of $\boxtimes$ and $\boxplus$ are free product and free addition of free random variables, respectively, and equivalent to matrix multiplication and addition, respectively  \cite{speicher03} \cite[\S 4]{voiculescu92}.
Alternatively, the relationship of $\exp(X_1 \boxplus X_2)$ is not necessary equal to the free multiplication of $\exp(X_1) \boxtimes \exp(X_2)$. 
The log-semicircle distribution is not the central limit for the product of positive free random variables. 

Here, the central limit distribution is derived for the product of positive free random variables.
The logarithm of the central limit is found to be the sum of two independent free random variables: one uniformly distributed and another semicircularly distributed.
The moment-generating function of logarithm of the central limit is also equal to $\exp(\xi^2 s/2) {_{1}F_{1}}\left(1-s; 2; -\xi^2 s \right)$, where ${_{1}F_{1}}\left(a; b; x\right)$ is the confluent hypergeometric function. 

There was a long history to study the product of random matrices \cite{furstenberg60} to represent a physical system with the cascade of a chain of  basic random elements with identical statistical properties.
The results here are based on the multiplication of free random variables  \cite{voiculescu87}.
The characteristic of the central limit were derived in \cite{bercovici92}, together with all moments \cite{biane95}.
The central limit of the product of free random variables was studied in \cite{bercovici00,bercovici08,chistyakov08a} as infinitely divisible free random variables.
With all moments, the distribution is typically uniquely determined \cite{shohat} but difficult to find the distribution explicitly.
When the moment-generating function is translated to characteristic function, the distribution is just the inverse Fourier transform.  

To simplify the problem, for independent identically distributed positive free random variables $X_i$ with a product of $Y = \prod_{i=1}^n X_i$, we always assume that $\log X_i$ is zero mean. 
In \cite{kargin07}, the norm was considered for the case with unity mean finite support $X_i$.
The geometric mean was found in \cite{tucci10}, similar to the large number theory for commutating random variables.
The theory in \cite{bercovici00,bercovici08,chistyakov08a,kargin07,tucci10} may applicable to the case that $\log X_i$ do not exist, i.e., with an atom in zero. 
The theory here is only applicable to the case with the existing of $\log X_i$.

For zero-mean $\log X_i$, the central limit is only meaningful when the accumulated variance, $\xi^2 = \sum_{i=1}^n \varphi \left(\log^2 X_i  \right)$, is not very large, where $\varphi(A^k)$ denotes the $k$th-moment of a free random variable $A$.
The geometric mean  \cite{tucci10} or the Lyapunov exponent \cite{furstenberg60}, is very meaningful when the accumulated variance is far larger than unity, $\xi^2 >> 1$. 
In practical applicable, both geometric mean or Lyapunov exponent find the average contribution of each component $X_i$, especially when the number $n$ approaches infinity such that $\log Y$ has infinite variance. 
In some applications, $\log X_i$ may have a variance very close to zero but $\xi^2$ is finite with an order close to unity.
In engineering applications requiring meaningful approximation, the central limit may be used when the number $n$ is finite but large to render the validity of central limit. 
Here, we are interested when $\xi$ is at almost 10 to 20 dB.  

Later parts of this paper are organized as following: Sect. \ref{sec:main} summarizes the main results of this paper; Sects. \ref{sec:mom} and \ref{sec:times} give the proof of the theorem related to the product of free random variables; Sect. \ref{sec:num} shows some properties of the central limit and plots the distribution; Sect. \ref{sec:con} is the conclusion of this paper.

\section{Main Results}
\label{sec:main}

The goal here is to find the central limit for the product of independent free random variables.
The moments of the central limit are found explicitly. 
The moment-generating function is derived analytically by the method of matching all order of the moments. 
To keep $X_i$ and the final product $Y$ to have ``similar'' magnitude in variance, each component of the product is scaled by $X_i^{{1}/{\sqrt{n}}}$.
The main theorem of this paper is:

\begin{theorem}
Let $X_i$, $i = 1, \dots, n$,  denote identical independently distributed positive free random variables, if  $\log X_i$ are zero mean and have variance of $\xi^2 = \varphi\left( \log^2 X_i \right)$, the central limit of 
\begin{equation}
Y = \lim_{n \rightarrow \infty}  X_1^{\frac{1}{\sqrt{n}}} \boxtimes  X_2^{\frac{1}{\sqrt{n}}} \boxtimes \cdots \boxtimes  X_n^{\frac{1}{\sqrt{n}}}
\label{eq:ydef}
\end{equation}
has the following properties:
\begin{enumerate}[(a)]
\item The central limit is 

\begin{equation}
\log Y = \xi G + \frac{1}{2} \xi^2 F 
\label{logYsum}
\end{equation}

\noindent where $G$ is a free-random variable with zero-mean semicircle distribution and unity variance, $F$ is a free random variable uniformly distributed between $\pm1$, and $G$ and $F$ are independent of each other. 

\item The moment-generating function of $\log Y$ is 

\begin{equation}
\exp(\xi^2 s/2) {_{1}F_{1}}\left(1-s; 2; -\xi^2 s \right).
\label{logYmfun}
\end{equation} 

\end{enumerate}
\label{the1}
\end{theorem}

If Theorem \ref{the1} is correct, we should have the following straightforward corollaries:

\begin{corollary}
The moment-generating function of 

\begin{equation}
S = \xi G + \frac{1}{2} \xi^2 F
\label{Sdef}
\end{equation}
\noindent is
\begin{equation}
M_S(s) = \exp(\xi^2 s/2) {_{1}F_{1}}\left(1-s; 2; -\xi^2 s \right).
\label{logSmfun}
\end{equation} 
\label{cor2}
\end{corollary}

\begin{corollary}
The $k$th-moment of the central limit $Y$ \eqn{eq:ydef} is
\begin{equation}
\varphi\left(Y^k\right) = \exp\left(k \xi^2/2\right) {_{1}F_{1}}(1-k; 2; -k \xi^2).
\label{kmomentY}
\end{equation}
\label{cor1}
\end{corollary}

%
%

This study is always limited to the case with zero mean $\log X_i$, leading to zero-mean $\log Y$ as from \eqn{logYsum}.
If $\log X_i$ have positive mean, $Y$ \eqn{eq:ydef} grows to infinity. 
If $\log X_i$ have negative mean, $Y$ \eqn{eq:ydef} shrinks to zero.
Only the cases with zero mean $\log X_i$ is interested for the study of $Y$ \eqn{eq:ydef}.
In practice, the mean of $\log Y$ may be studied separately as the Lyapunov exponent as in \cite{furstenberg60}. 

If Corollaries \ref{cor2} and \ref{cor1} are both correct, Theorem \ref{the1} can be proved with minimal efforts. 
With moment-generating function of $M_S(s)$ of \eqn{logSmfun} and by the definition of moment-generating function, the $k$th-moment of $\exp(S)$ is $M_S(k)$  that is the same as \eqn{kmomentY}.
Because the $k$th-moment of $Y$ is the same as the $k$th-moment of $\exp(S)$, $Y$ has the same distribution as $\exp(S)$ as the moment uniquely determined the distribution \cite{shohat}.
Because $Y$ is positive definite, $\log Y$ has the same distribution as $S$ given by \eqn{logYsum} or \eqn{Sdef}.
Because of Corollary \ref{cor2}, $\log Y$ also has moment-generating function given by \eqn{logYmfun} or \eqn{logSmfun}. 
Note that the uniqueness requires the moment to satisfy the Carleman condition \cite[\S 1.6]{shohat} that is easy to verify. 

The following sections prove Corollaries \ref{cor2} and \ref{cor1}, respectively.

\section{moment-generating function for $S = \xi G + \frac{1}{2} \xi^2 F$}
\label{sec:mom}

The propose of this section is to find the moment-generating function of $S = \xi G + \frac{1}{2} \xi^2 F$ \eqn{Sdef} as given by Corollary \ref{cor2}.
We will first derive the $n$th-moment of $S$ and compared it with the $n$th-moment given by the moment-generating function $M_S(s)$ \eqn{logSmfun}.

The statistics of the sum of free random variables can be found by the sum of their corresponding $R$-transform \cite{nica06, voiculescu92}.
The $R$-transform is defined by the algebraic relationship of 

\begin{equation}
G\left( R(z) + \frac{1}{z} \right) = z
\label{Rdef}
\end{equation}
where $G(z)$ is the Cauchy-Stieltjes transform of a measure $A$ that is given by the expectation of

\begin{equation}
G(z) = \int \frac{1}{z - t} d \mu_A(t) 
 = \sum_{k = 0}^{\infty} \frac{\varphi\left(A^k\right)}{z^{k+1}}.
 \label{Gdef}
 \end{equation}
 Here, the $k$th-moment of a free random variable is denoted as $\varphi\left(A^k\right)$.
 If $A$ is a large random matrix, $\varphi\left(A^k\right)$ is the $k$th-moment of the eigenvalues for $A$. 
 
 In the summation \eqn{Sdef}, $G$ is semicircularly distributed with unity variance, or a radius of $2$.
 Typically, $G$ is assumed as a large random Gaussian matrix \cite{ho14} but many other large random matrices are possible \cite{ erdos10, tao09}.
 The $R$-transform for $G$ is $R_G(s) = s$ \cite{nica06, voiculescu92}.
 $F$ is a free random variable uniformly between $\pm1$, the $k$th moment for $F$ is $\varphi\left(F^k\right) = 1/(k+1)$ for even $k$ and $0$ otherwise.   
 The Cauchy-Stieltjes transform of $F$ is 
 
 \begin{equation}
 G_F(z) = \sum_{k=0}^\infty \frac{1}{(2k+1)z^{2k+1}} =  \coth^{-1} z
 \end{equation}
 \noindent and the $R$-transform for $\frac{1}{2} \xi^2 F$ is $\frac{1}{2} \xi^2 \coth\left( \frac{1}{2} \xi^2 z  \right) - 1/z$. 
 The $R$-transform for $S$ becomes
 
 \begin{equation}
 R_S(z) = \xi^2 z + \frac{1}{2} \xi^2 \coth\left( \frac{1}{2} \xi^2 z  \right) - \frac{1}{z}
 \end{equation}
 The Cauchy-Stieltjes transform of $S$ is given by the inverse function of 
 \begin{equation}
 G^{-1}_S(z) = \xi^2 z + \frac{1}{2} \xi^2 \coth\left( \frac{1}{2} \xi^2 z  \right)
 \end{equation}
\noindent From \eqn{Gdef}, instead of using $G(z)$, the moments of $S$ can be found more conveniently using $G(1/z)$. 

Using the Lagrange inversion formula \cite[\S 7.32]{whittaker} with variation in \cite{kargin07a,kargin07}, 
if the dependence between $w$ and $z$ is given by $w/\phi(w) = z$ and $\phi(0) \neq 0$, the inversion series for $w = g(z)$ is given by
\begin{equation}
g(z) = \sum_{k=1}^\infty \frac{z^k}{k!} \left.  \frac{d^{k-1}}{d w^{k-1}} \phi(w)^k \right|_{w = 0}.
\label{lagbur}
\end{equation}

Using the formulation of \eqn{lagbur}, the inverse of $G(1/z)$ is given by the inverse of $w/\phi(w)$ with 

\begin{equation}
\phi(w) = \xi^2 w^2 + 1 + \frac{1}{2} \xi^2 w H \left( \frac{1}{2}  \xi^2 w \right)
\label{phiwHdef}
\end{equation}
where $H(x) = \coth x - 1/x$.

The $n$-th moments of $S$ is given by
\begin{equation}
\left. \frac{1}{(n+1)!}\frac{d ^n}{ d w^n} \phi(w)^{n+1} \right|_{w = 0}
\end{equation}
\noindent from Lagrange inversion formula \eqn{lagbur}, or $1/(n+1)$ the coefficient of $w^n$ in $\phi(w)^{n+1}$.

With the expansion of 

\begin{equation}
\phi(w)^{n+1} = \sum_{k=0}^{n+1} \binom{n+1}{k}  \left[ \frac{1}{2} \xi^2 w H \left( \frac{1}{2}  \xi^2 w \right)\right]^{k}  \left(1 + \xi^2 w^2\right)^{n+1-k}
\label{phi_nplus1}
\end{equation} 

\noindent
For the coefficient at $w^n$, the first term of \eqn{phi_nplus1} may give $w^k$ for $k = 0$ to $n$, and the second term of \eqn{phi_nplus1} can give
\begin{equation}
\left.
\frac{\xi^{2i}} {i! 2^i} 
\frac{d^i}{d w^i} H^k(w) \right|_{w = 0} w^i
\end{equation}
\noindent and the third term of \eqn{phi_nplus1} gives $w^{n-k-i}$ with $n-k-i$ an even number.
Combined them together, the $n$th moment of $S$ becomes

\begin{equation}
\frac{1}{n+1} \sum_{k=0}^{n} \binom{n+1}{k} \frac{\xi^{2k}}{2^k}
\sum_{i=0}^{n-k} \frac{\xi^{2i}}{2 ^{i}i!}  \left. \frac{d^i}{d w^i} H^k(w) \right|_{w = 0} 
       \binom{n+1-k}{(n - k -i)/2} \xi^{n-k-i}
       \nonumber
\end{equation}
\noindent or
\begin{equation}
\frac{1}{n+1} \sum_{k=0}^{n} \binom{n+1}{k} 
\sum_{i=0}^{n-k} \frac{\xi^{n+k+i}}{2 ^{k+i} i!} \left. \frac{d^i}{d w^i} H^k(w) \right|_{w = 0} 
       \binom{n+1-k}{(n - k -i)/2} 
 \label{n_S_moment_1}
\end{equation}
\noindent with summation only when $n-k-i$ is even number. 

For arbitrary $i$ and $k$, the summation of \eqn{n_S_moment_1} cannot guarantee that $(n-k-i)/2$ is an integer. 
Because $H(x)$ defined for \eqn{phiwHdef} is an odd function, $k$ and $i$ must be both even and odd number together such that $\left. \frac{d^i}{d w^i} H^k(w) \right|_{w = 0}$ is non-zero. 
In order for $(n-k-i)/2$ as an integer, $n$ must be an even number.
Both $G$ and $F$ in \eqn{Sdef} have distribution symmetric with respect to zero, $S$ \eqn{Sdef} has only even moment that is the same as the requirement that $n-k-i$ is an even number.
 
If the $n$th-moment \eqn{n_S_moment_1} is also given by the moment-generating function of \eqn{logSmfun}, Corollary \ref{cor2} is correct.
Here, the $n$th moment of \eqn{logSmfun} is derived and found to be the same as \eqn{n_S_moment_1}.

First of all, the confluent hypergeometric function in \eqn{logSmfun} has the Buchholz expansion \cite[p. 97]{buchholz}

\begin{equation}
{}_1 F_1(1-a, 2; s)
    = 2 e^{-s/2} \sum_{k=0}^{\infty}
      p_k(s) \frac{J_{k+1}(2 \sqrt{a s})} {\left(2 \sqrt{a s}\right)^{k+1}}
\label{buchexp}
\end{equation}
 \noindent where $J_k(z)$ is the Bessel function of the first kind and $p_k(z)$ is the Buchholz polynomial given by \cite{abad95, lopez99}
 
\begin{equation}
\exp \left[ - \frac{1}{2} s \left( \coth w - \frac{1}{w} \right) \right]
  = \sum_{k = 0}^\infty p_k(s) \left( - \frac{w}{s} \right)^k
  \end{equation}
 \noindent or
 \begin{equation}
 p_k(s) = \frac{ (-z)^k}{k!} \left. \frac{d^k}{d w^k} \exp
 \left( - \frac{1}{2} s H(w) \right) \right|_{w = 0}
 \label{buchpol}
 \end{equation}  
 \noindent where $H(w) = \coth w - 1/w$ is the same as that defined for \eqn{phiwHdef}.
 Using the Buchholz expansion, the moment-generating function $M_S(s)$ \eqn{logSmfun} becomes
 \begin{equation}
 M_S(s) = 2 \sum_{k=0}^\infty
    \frac{\xi^{2k} s^k} {k!} 
    \left. \frac{d^k}{d w^k} \exp
 \left(\frac{1}{2} \xi^2 s H(w) \right) \right|_{w = 0}
    E_{k+1}(2 \xi s)
    \label{msexpand}
 \end{equation}
 \noindent where
 \begin{equation}
 E_k(x) = \frac{I_k(x)}{x^k}
   = 2^{-k} \sum_{m=0}^\infty 
     \frac{1}{m!(m+k)!} \left( \frac{x}{2} \right) ^{2m}
 \end{equation}
 where $I_k(x)$ is the modified Bessel function of the first kind.
 
 To find the $n$th-moment of $M_S(s)$ \eqn{logSmfun}, the expression $M_S(s)$ may be expanded in a power series of $s$ and the coefficient of $s^n$ is $n!$ times the $n$th-moment of the random variable given the the corresponding moment-generating function. 
 In the expansion of $M_S(s)$ \eqn{msexpand}, $s^k$ comes from each term of the expansion.
 There is also contribution from the third term of $E_{k+1}(2 \xi s)$ and from Buchholz polynomial [the term with $H(w)$].
 The three terms combined together give the coefficient for $s^n$.
 
 The coefficient of $s^i$  due to Buchholz polynomial is given by
 \begin{equation}
\left. \frac{1}{i!} \frac{d^i}{d s^i}
 \frac{d^k}{d w^k}
 \exp \left( \frac{1}{2} \xi^2 s H(w) \right) \right|_{w = s = 0}
   = \frac{\xi^{2i}}{2^i i!} 
  \left. \frac{d^k}{d w^k} H^i(w) \right|_{w = 0}
 \end{equation}
 
 With all terms and for $n -k -i$ as even number, the $n$th-moment from $M_S(s)$ is
 \begin{equation}
 2n! \sum_{k=0}^n
 \frac{\xi^{2k}} {k! 2^{k+1}}
 \sum_{i=0}^{n-k} \frac{\xi^{2i}}{2^i i!}
 \left. \frac{d^k}{d w^k} H^i(w) \right|_{w = 0}
 \frac{\xi^{n-k-i}}{[(n-k-i)/2]![(n+k-i)/2+1]!}
 \end{equation}
 \noindent or
 
 \begin{equation}
 \sum_{k=0}^n
 \frac{1} {k!}
 \sum_{i=0}^{n-k} 
 \left. \frac{d^k}{d w^k} H^i(w) \right|_{w = 0}
 \frac{\xi^{n+k+i} n!}{i! 2^{k+i} [(n-k-i)/2]![(n+k-i)/2+1]!}
 \label{n_S_moment_2}
 \end{equation}
 
 By swapping the indexes $i$ and $k$ in \eqn{n_S_moment_2}, the factors with $\xi$ and $H(w)$ in the $n$th-moment of \eqn{n_S_moment_1} are the same as that in \eqn{n_S_moment_2}.
 With little algebra and with the restriction that $n-k-i$ is even number, it may be found that 
 \begin{equation}
 \frac{1}{n+1}
 \binom{n+1}{k}
 \binom{n+1-k}{(n-k-i)/2}
 \end{equation}
 \noindent from \eqn{n_S_moment_1} is the same as
 \begin{equation}
 \frac{n!}{k![(n-k-i)/2]![(n+k-i)/2+1]!}
 \end{equation}
 \noindent from \eqn{n_S_moment_2}.
 
 The $n$th-moment of \eqn{n_S_moment_1} is the same as the $n$th-moment of \eqn{n_S_moment_2}.
 As the $n$th-moment of \eqn{n_S_moment_1} is derived based on free random variable theory of $S$ from \eqn{Sdef} and the $n$th-moment of \eqn{n_S_moment_2} is derived based on the moment-generating function \eqn{logSmfun}, Corollary \ref{cor2} is correct that the moment-generating function of $S$ from \eqn{Sdef} is $M_S(s)$ \eqn{logSmfun}.

\section{Products of Free Random Variables}
\label{sec:times}

Based on the $S$-transform \cite{rao07, voiculescu87}, the central limit for the product of free random variables can be derived. 
The moments for the central limit are derived here to prove Corollary \ref{cor1}.
The central limit for the product of free random variables was considered very early in \cite{bercovici92,  bercovici00, biane95} but a simple process is offered here. 

Instead of consider the product of positive independent identically distributed free random variables $X_i$, we assume that $v_i = \log X_i$ exist such that $\varphi\left(v_i\right) = 0$ and $\varphi\left(v_i^2\right) = 1$, $i = 1, \dots, n$.
The free random variables of $\exp(v_i)$, $i = 1, \dots, n$, are always positive.
The assumption of $\varphi\left(v_i^2 \right) = 1$ is not essential but only for convenience. 
Define
\begin{equation}
Y_n = \exp\left( \frac{v_1}{\sqrt{n}} \right) \boxtimes \exp\left( \frac{v_2}{\sqrt{n}}\right) \boxtimes \cdots \boxtimes \exp\left( \frac{v_n}{\sqrt{n}}\right),
\label{eq:yn}
\end{equation}
and 
\begin{equation}
Y = \lim_{n \rightarrow \infty} Y_n.
\label{eq:y}
\end{equation}
Same as \eqn{eq:ydef}, the free random variable $Y$ is considered here.

The product of free random variables can be studied based on the product of the corresponding $S$-transforms \cite{rao07, voiculescu87}.
The $S$-transform of a free random variable $A$ is given by
\begin{equation}
S(z) = \frac{1 + z}{z} \chi(z)
\label{StrDef}
\end{equation}
\noindent where $\chi(z)$ is the inverse function by composition, $\chi(\psi(z)) = z$, of the moment function  given by
\begin{equation}
\psi(z)  = \int \frac{zt}{1 - zt} d \mu_A(t)
   = \sum_{k=1}^\infty \varphi\left(A^k\right) z^k.
   \label{chidef}
\end{equation}

Using $S$-transform \eqn{StrDef} for $Y_n$ \eqn{eq:y}, $S_{Y_n}(z) = \left[ S_{e^{v_i/\sqrt{n}}}(z) \right]^n$ and $S_Y(z) = \lim_{n \rightarrow \infty}S_{Y_n}(z)$. 
Although $S$-transform \eqn{StrDef} may extend to free random variables that are not always positive \cite{rao07}, free random variables $\exp\left( {v_i}/{\sqrt{n}} \right)$ here are always positive. 

\begin{lemma} \cite{bercovici92}
The $S$-transform of $Y$  \eqn{eq:y} is
\begin{equation}
S_Y(z) = \exp\left( -z - \frac{1}{2}\right).
\end{equation}
\label{lemmasy}
\end{lemma}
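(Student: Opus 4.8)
The plan is to exploit the multiplicativity of the $S$-transform (Lemma~\ref{voi}) together with a perturbative inversion of the moment function. Since the factors $\exp(v_i/\sqrt n)$ are identically distributed and free, Lemma~\ref{voi} gives $S_{Y_n}(z) = S_{\exp(v/\sqrt n)}(z)^n$, and with $S_Y = \lim_{n\to\infty} S_{Y_n}$ this becomes
\begin{equation}
\log S_Y(z) = \lim_{n\to\infty} n\,\log S_{\exp(v/\sqrt n)}(z).
\end{equation}
Writing $\epsilon = 1/\sqrt n$, the problem reduces to extracting the coefficient of $\epsilon^2$ in the small-$\epsilon$ expansion of $\log S_{\exp(\epsilon v)}(z)$: the constant term must vanish (it does, since $\exp(\epsilon v)\to 1$ and $S_1(z)=1$), the $O(\epsilon)$ term must vanish (it does, because $\varphi(v)=0$), and the surviving $\epsilon^2$ coefficient is exactly $\log S_Y(z)$.

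To carry this out I would first expand the moments
\begin{equation}
m_k = \varphi\!\left(e^{k\epsilon v}\right) = 1 + \tfrac{1}{2}k^2\epsilon^2 + O(\epsilon^3),
\end{equation}
where the linear term drops by $\varphi(v)=0$ and the $\epsilon^2$ coefficient uses only $\varphi(v^2)=1$. Summing the resulting series with the standard identities for $\sum_{k\ge1}z^k$ and $\sum_{k\ge1}k^2z^k$ yields
\begin{equation}
\psi_{\exp(\epsilon v)}(z) = \frac{z}{1-z} + \frac{\epsilon^2}{2}\,\frac{z(1+z)}{(1-z)^3} + O(\epsilon^3).
\end{equation}
I would then invert by composition order by order, writing $\chi = \chi_0 + \epsilon^2\chi_1 + O(\epsilon^3)$: the zeroth order is the inverse of $z/(1-z)$, namely $\chi_0 = z/(1+z)$, while the first-order correction solves the linearised equation $\psi_0'(\chi_0)\,\chi_1 = -\psi_1(\chi_0)$. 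Assembling $S = \frac{1+z}{z}\chi$ collapses the algebra to
\begin{equation}
S_{\exp(\epsilon v)}(z) = 1 - \epsilon^2\!\left(z + \tfrac{1}{2}\right) + O(\epsilon^3),
\end{equation}
so that $\log S_{\exp(\epsilon v)}(z) = -\epsilon^2\left(z+\tfrac12\right) + O(\epsilon^3)$, and multiplying by $n=\epsilon^{-2}$ and letting $n\to\infty$ gives $\log S_Y(z) = -\left(z+\tfrac12\right)$, i.e. $S_Y(z)=\exp\!\left(-z-\tfrac12\right)$.

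The step I expect to be the main obstacle is the rigorous passage to the limit: justifying that the $O(\epsilon^3)$ remainders — which is precisely where all the higher moments $\varphi(v^3),\varphi(v^4),\dots$ enter — genuinely vanish after multiplication by $n=\epsilon^{-2}$. Informally $n\cdot O(\epsilon^3)=O(\epsilon)\to 0$, but making this precise requires controlling the remainder uniformly on a fixed disk in $z$ as $\epsilon\to 0$, so that the inversion by composition and the logarithm remain analytic and the error terms stay genuinely $O(\epsilon^3)$ there. This is the exact mechanism of the free central limit theorem: only the first two moments of $v$ survive, and all higher moments are washed out. I would secure it either by a Rou\'ch\'e-type argument for the local invertibility of $\psi$ near $\chi_0$ uniformly in $\epsilon$, or by treating the expansions as formal power series in $z$ and observing that each fixed power of $z$ receives only finitely many corrections, each of which is $O(\epsilon)$ after the $n$-scaling.
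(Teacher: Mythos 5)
Your proof is correct and takes essentially the same route as the paper: a perturbative expansion of the moment function $\psi_{\exp(v/\sqrt{n})}$ in powers of $1/\sqrt{n}$, inversion by composition to obtain $\chi$, assembly of $S_{\exp(v/\sqrt{n})}(z) = 1 - \frac{1}{n}\left(z+\frac{1}{2}\right) + O(n^{-3/2})$, and then the $n$-th power and limit. The only difference is bookkeeping: the paper carries the $\varphi(v_i^3)$ and $\varphi(v_i^4)$ correction terms explicitly (noting they are immaterial to the limit), whereas you absorb them into the $O(\epsilon^3)$ remainder and flag the uniformity issue the paper leaves implicit.
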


\begin{proof}
We first derive $S_{e^{v_i/\sqrt{n}}}(z)$. 
The $S$-transform of $\exp\left( {v_i}/{\sqrt{n}} \right)$ can be found as a power series of $1/\sqrt{n}$, in essence the perturbation analysis of the $S$-transform.
Assume that $\psi_{v_i}(z) = z^2 + \sum_{k = 3}^{+\infty} \varphi\left(v_i^k\right) z^k$ as $v_i$ are zero mean, unity variance free random variables.
Note that the moments of $\varphi\left(v_i^k\right)$, $k \geq 3$, is not very important to arrive with the results but includes here for completeness.

The moment function of $\exp(v_i)$ is 
\begin{equation}
\psi_{e^{v_i}}(z) = \frac{z}{1-z} + \frac{z(1 + z)}{2 (1-z)^3} + \sum_{k \geq 3} \frac{\varphi\left(v_i^k\right)} {k!}\left. \frac{\partial^k}{\partial t^k} \left[\frac{z}{e^{-t} - z}\right] \right|_{t = 0},
\label{varphvi}
\end{equation}
where $\left. \frac{\partial^k}{\partial t^k} \left[\frac{z}{e^{-t} - z}\right] \right|_{t = 0} = \sum_{l \geq 1 } l^k z^l$ is the coefficient corresponding to $\varphi\left(v_i^k\right)$ for the moment function $\psi_{e^{v_i}}(z)$.
The summation of $\sum_{l \geq 1 } l^k z^l$ may be obtained by expanding $e^{k v_i}$ in $\sum_{k > 0} \varphi\left(e^{k v_i}\right) z^k$.

The moment function of $\exp\left( {v_i}/{\sqrt{n}} \right)$ as a power series of $1/\sqrt{n}$ is 
\begin{equation}
\psi_{e^{v_i/\sqrt{n}}}(z) = \frac{z}{1-z} \left[ 1 + \frac{1 + z}{2 n (1 - z)^2} 
			+ \varphi\left(v_i^3 \right) \frac{g_3(z)}{ 6 n^{\frac{3}{2}}} 
			- \varphi\left(v_i^4 \right) \frac{g_4(z)}{ 24 n^2} +O(n^{-\frac{5}{2}}) \right],
			\label{varphiv1}
\end{equation}
where 
\begin{eqnarray}
g_3(z) & = & \frac{1 + 4z + z^2}{  (1 - z)^3} \nonumber \\
g_4(z) & = & \frac{z^3 + 11z^2 + 11z + 1}{(1-z)^4} \nonumber.
\end{eqnarray}

The inverse function of \eqn{varphiv1} can also be expressed as a power series of $1/\sqrt{n}$ to
\begin{equation}
\chi_{e^{v_i/\sqrt{n}}}(z) = \frac{z}{ 1 + z}
    \left[1 - \frac{1}{n}\left( z + \frac{1}{2}\right) 
             - \frac{\varphi\left(v_i^3\right)}{n^{\frac{3}{2}}} h_3(z) 
             + \frac{\varphi\left(v_i^4\right)}{n^2} h_4(z)
              +  O(n^{-\frac{5}{2}})
     \right],
     \label{chiv1}
\end{equation}
where
\begin{eqnarray}
h_3(z) & = & z^2 + z + \frac{1}{6} \nonumber \\
h_4(z) & = & z^3 + 2 z^2 + \frac{7}{6}z+\frac{5}{24} \nonumber .
\end{eqnarray}
The $S$-transform of $e^{v_i/\sqrt{n}}$ is
\begin{equation}
S_{e^{v_i/\sqrt{n}}}(z) = 1 - \frac{1}{n}\left( z + \frac{1}{2}\right) 
             - \frac{\varphi\left(v_i^3\right)}{n^{\frac{3}{2}}} h_3(z) 
             + \frac{\varphi\left(v_i^4\right)}{n^2} h_4(z)
              +  O(n^{-\frac{5}{2}}).
\end{equation}

The $S$-transform of $Y_n$ \eqn{eq:yn} becomes a power series of $1/\sqrt{n}$ using the relationship of $S_{Y_n}(z) = [S_{e^{v_i/\sqrt{n}}}(z)]^n$  as
\begin{equation}
S_{Y_n}(z) 
= \exp\left\{ -z - \frac{1}{2} 
  - \frac{\varphi\left(v_i^3\right)}{\sqrt{n}} h_3(z) 
  + \frac{1}{n} \left[\varphi\left(v_i^4\right) h_4(z) - \frac{1}{2}\left(z + \frac{1}{2}\right)^2 \right]
   + O (n^{-\frac{3}{2}}) \right\}
   \label{synz}
\end{equation}
and
\begin{equation}
S_Y(z) = \lim_{n \rightarrow \infty} S_{Y_n}(z)
      = \exp\left( -z - \frac{1}{2}\right).
     \label{syz}
\end{equation}
%
\qed
\end{proof}

The $S$-transform of \eqn{syz} is infinitely divisible from the theory of \cite{bercovici00,bercovici08,chistyakov08a}.
In practice, all terms higher than the second-order is not required in \eqn{varphiv1}, \eqn{chiv1} and \eqn{synz}.
The $S$-transform of \eqn{synz} shows the convergent properties with the increase of $n$, similar to the analysis of \cite{kargin07a}.
For non-symmetric free random variable $v_i$ with non-zero skewness proportional to $\varphi\left(v_i^3\right)$, the convergence is far slower than the symmetric free random variable with zero skewness.

The inversion of the moment function $\psi_Y(z)$ is given by
\begin{equation}
\chi_Y(z) = \frac{z}{1 + z} \exp\left( -z - \frac{1}{2} \right).
\label{chiYz}
\end{equation}

Similar to \cite{biane95} and using the Lagrange inverse formula \eqn{lagbur},  we need to use $\phi(w) = (1+w)\exp\left( w + \frac{1}{2}\right)$.
From the definition of moment function \eqn{chidef}, the $k$th-moment of $Y$ is given by
\begin{equation}
\varphi\left(Y^k\right) = \frac{e^{k/2}}{k!} \left. \frac{d^{k-1}}{d w^{k-1}} (1 + w)^k e^{kw} \right|_{w = 0}
\end{equation}
or
\begin{equation}
\varphi\left(Y^k\right) = \frac{1}{k}\exp\left(\frac{k}{2}\right) \sum_{m=0}^{k-1} \frac{k^m}{m!} \binom{k}{m+1},
\label{kYmoment}
\end{equation}
by expanding $e^{kw}$ and collecting all terms with $w^{k-1}$ in $(1 + w)^k e^{kw}$.

Using the generalized Laguerre polynomial $L_n^{(\alpha)}(z)$ \cite{koornwinder10} \cite[\S 5.1]{szego75}, the $k$th-moment \eqn{kYmoment} can be expressed as
\begin{equation}
\varphi\left(Y^k\right) = \frac{e^{k/2}}{k} L^{(1)}_{k-1}(-k).
\label{kYmomentL}
\end{equation}

Confluent hypergeometric function ${_{1}F_{1}}(a; b; z)$ \cite[\S 5.3]{szego75} can also be used instead of the Laguerre polynomial.
The moment \eqn{kYmomentL} becomes
\begin{equation}
\varphi\left(Y^k\right) = e^{k/2} {_{1}F_{1}}(-k+1; 2; -k) = M_{k,\frac{1}{2}}(-k)/k,
\label{kYmomentF}
\end{equation}
where $M_{\kappa, \mu}(z)$ is the Whittaker function \cite[\S 16.1]{whittaker}.

With $\varphi\left(v_i^2\right) = \xi^2$, following the procedure from \eqn{varphvi} to \eqn{syz} without details, we get
\begin{equation}
S_Y(z) = \exp\left[-\xi^2 \left(z + \frac{1}{2}   \right)   \right].
\end{equation}

Following the procedure from \eqn{chiYz} to \eqn{kYmomentL} without details, we can obtain 
\begin{equation}
\varphi\left(Y^k\right) = \frac{e^{\xi^2 k/2}}{k} L^{(1)}_{k-1}(-\xi^2 k)
             = e^{k \xi^2/2} {_{1}F_{1}}(-k+1; 2; -k \xi^2)
\label{kYmomentLsigma}
\end{equation}
\noindent and Corollary \ref{cor1} is proved. 

If we have only the $k$th-moment $\varphi\left(Y^k\right)$  by itself, we cannot guarantee that the moment-generating function of $\log Y$ of $\varphi\left( e^{s \log Y} \right) = \varphi\left(Y^s\right)$ is the same as \eqn{logYmfun}.
The fractional moment given by \eqn{logYmfun} may not correspond to a random variable. 
However, if we are able to find a distribution having the same moment-generating function of \eqn{logYmfun}, due to the uniqueness of a distribution determined by the $k$th-moment, we can extend from $\varphi\left(Y^k\right)$  to $\varphi\left(Y^s\right)$.
From Corollary \ref{cor2}, the distribution is found to be the same as \eqn{logYsum}.

Combining both Corollaries \ref{cor2} and \ref{cor1}, Theorem \ref{the1} is proved. 

\section{Numerical Results}
\label{sec:num}

Using the results from Theorem \ref{the1}, this section gives some properties of the central limit for the product of free random variables. Some numerical results are also presented here.

Using the Kummer transformation \cite[\S 13.2.39]{olde10} with 
\begin{equation}
e^{\xi^2 s/2} {_{1}F_{1}}\left(1-s; 2; -\xi^2 s \right) = e^{-\xi^2 s/2} {_{1}F_{1}}\left(1+s; 2; \xi^2 s \right),
\end{equation}
the moment-generating function \eqn{logYmfun} is an even function with $\varphi(Y^s) = \varphi(Y^{-s})$.
All the odd moments of $\log Y$ are equal to zero. 
The free random variable $\log Y$ is symmetric with respect to zero.

Directly using Markov inequality \cite[\S 7.3.1]{grimmett}, we may obtain

\begin{equation}
\mathbf{Pr}\left(Y \geq r\right) = 0 ~~~\mathrm{if}~~~~ r >  \lim_{k \rightarrow \infty}\varphi(Y^k)^{1/k},
\label{eq:pospr}
\end{equation}
and the upper limit of a random variable is equal to $\lim_{k \rightarrow \infty}\varphi(Y^k)^{1/k}$.
From the asymptotic of Laguerre polynomial \cite[Eq. 4.7]{frenzen88}, 
we obtain
\begin{equation}
\varphi(Y^k) = 2 \alpha_0 \frac{I_1( c k)} {c k } [1 + O(k^{-1})],  
\label{ykasy}
\end{equation} 
where $\alpha_0$ is a constant and 
\begin{equation}
c = \xi \sqrt{1 + \frac{\xi^2}{4}} + 2 \log\left( \frac{\xi}{2} + \sqrt{1 + \frac{\xi^2}{4}} \right).
\label{c0main}
\end{equation}

Using the asymptotic of $I_1( z) \sim e^{z}/\sqrt{2 \pi z}$ \cite[\S 10.40.1]{olver10} for \eqn{ykasy}, and with \eqn{eq:pospr}, we obtain that
\begin{equation}
 \log Y \leq c
\end{equation}
Because $\log Y$ is symmetrical, we have $| \log Y | \leq c$. 
Similar results were derived in \cite{biane97}.

\fig{fig:pdf} shows the central limit distribution of $\log Y$ for various values of $\xi$.
The distribution is calculated by inverse Fourier transform of the characteristic function of $M_S(i\omega)$ with the moment-generating function $M_S(s)$ given by  \eqn{logYmfun}.
The $x$-axis is normalized by $c$ from \eqn{c0main} to limit the distribution to between $\pm 1$.
The value of $\xi$ is expressed in decibel scale that is $10/\log10 = 4.34$ times the value in typical linear scale. 

\begin{figure}[t]
\centering{
 \includegraphics[width = 0.9 \textwidth]{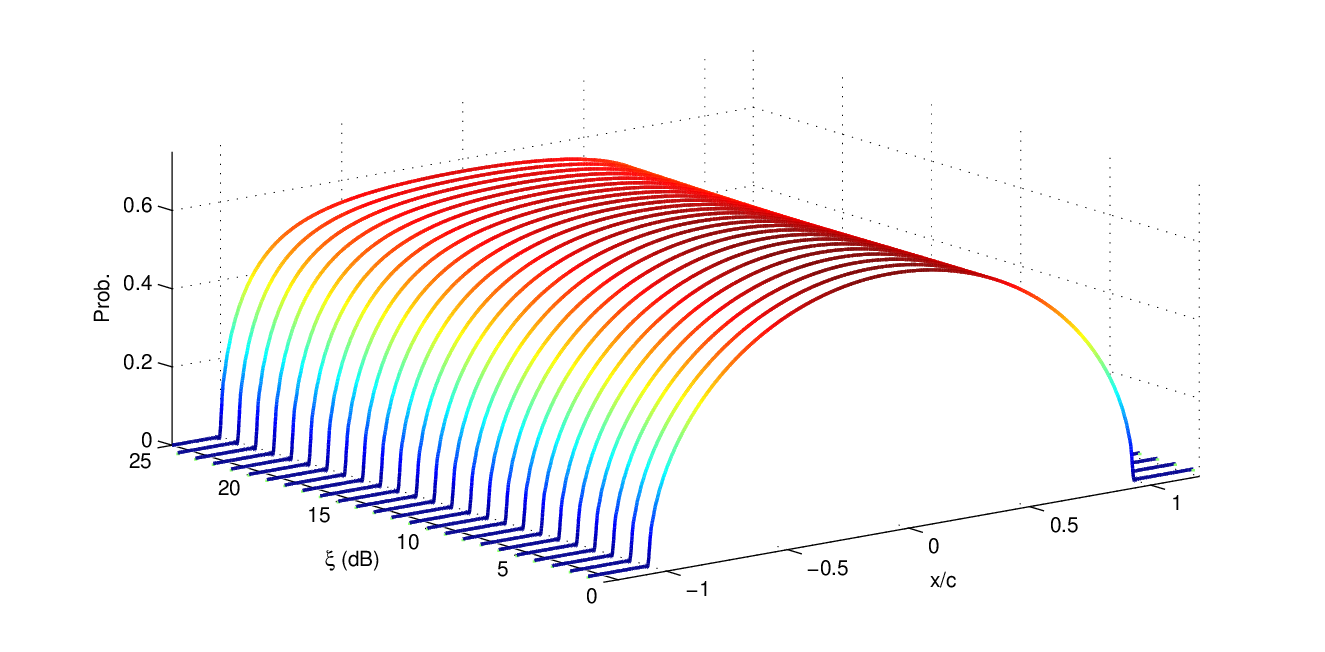}}
 \caption{The central limit distribution of $\log Y$ for different values of $\xi$.
  The $x$-axis is normalized by $c$ \eqn{c0main}.
 }
\label{fig:pdf}
\end{figure}

From \fig{fig:pdf}, the limited distribution is very close to semicircle distribution when $\xi$ is less than 10 to 15 dB.
The distribution is more close to uniform distribution when $\xi$ is larger than 20 dB.
From \eqn{logYsum}, the distribution is mainly from $G$ for small $\xi$, giving the semicircle distribution but the distribution is mainly from $F$ for large $\xi$, giving the uniform distribution.
For small $\xi$, the limit $c$ \eqn{c0main} is $c \approx 2 \xi$, and the distribution of $\log Y$ is a semicircle with radius of $2 \xi$. 
For large $\xi$, the limit $c$ \eqn{c0main} is $c \approx \xi^2/2$, and the distribution of $\log Y$ is uniform between $\pm \xi^2/2$. 

\section{Conclusion}
\label{sec:con}

The central limit for the product of positive free random variables is found to be the same as the sum of two independent free random variables with semicircle and uniform distribution, respectively.
The logarithm $\log Y$ of the central limit also have a simple moment-generating function $M_S(s)$ \eqn{logSmfun}.
The inverse Fourier transform of $M_S(i\omega)$ gives the probability density.


\end{document}